\newtheorem{thm}{Theorem}[section]
\newtheorem{lem}{Lemma}[section]
\newtheorem{exa}{Example}[section]
\newtheorem{dfn}{Definition}[section]
\newcommand{\N}{\mathbb{N}}
\newcommand{\R}{\mathbb{R}}
\newcommand{\C}{\mathbb{C}}
\title{Counting Primes Rationally And Irrationally}
\date{}
\author{N. A. Carella}
\begin{document}
\thispagestyle{empty}
\date{}
\maketitle

\vskip .25 in

\textbf{\textit{Abstract}:} The recent technique for estimating lower bounds of the prime counting function 
$\pi(x)=\#\{p \leq x: p\text{ prime}\}$ by means of the irrationality measures $\mu(\zeta(s)) \geq 2$ of special values of the zeta function claims 
that $\pi(x) \gg \log \log x/\log \log \log x$. This note improves the lower bound to $\pi(x) \gg \log x$, and extends the analysis to the irrationality measures $\mu(\zeta(s)) \geq 1$ for rational ratios of zeta functions.
\let\thefootnote\relax\footnote{ \today \date{} \\
\textit{AMS MSC}: Primary 11N05, Secondary 11A41, 11M26. \\
\textit{Keywords}: Distribution of prime; Prime counting function.}

\vskip .25 in 


\section{Introduction} \label{s1111}
Let $x\geq 1$ be a large number and define the prime counting function by

\begin{equation} \label{eq1111.99}
\pi(x)= \#\{p\leq x: p\text{ is prime} \}.
\end{equation}

Dozens of proofs are known for estimating a lower bound  of the prime counting function $\pi(x)= \#\{p\leq x: p\text{ is prime} \}$, 
see \cite[pp.\ 3-11]{RP96}, \cite{NW00}. The oldest Euclidean technique has dozens of versions and refinements, and remains a research topic today, see \cite{BA16} and Section \ref{s2299} for some information. The recent techniques introduced in \cite{BW07} and \cite{KE07} for estimating lower bounds of the prime counting function 
$\pi(x)=\#\{p \leq x: p\text{ prime}\}$ by means of the irrationality measures $\mu(\zeta(s)) \geq 2$ of special values of the zeta function claims 
that $\pi(x) \gg \log \log x/\log \log \log x$. This note improves the lower bound to $\pi(x) \gg \log x$, and extends the analysis to the irrationality measures $\mu(\zeta(s)) \geq 1$ for rational ratio of zeta functions. The proofs are independent of the irrationality measures $\mu(\alpha) \geq 1$ of the real numbers $\alpha \in \R$. 

\section{For Irrational Values $\zeta(s)$} \label{s2211}

\begin{thm} \label{thm2211.11} Let $x\geq 2$ be a large number, and 
let $\pi(x)= \#\{p\leq x: p\text{ is prime} \}$. 
Then

\begin{equation}
 \pi(x)\geq c_1\log  x +c_0,
\end{equation}

where $c_0$ and $c_1>0$ are constants.
\end{thm}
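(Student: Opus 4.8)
The plan is to work with the Euler product of the zeta function, but deliberately in the range $0<s<1$ \emph{below} its abscissa of convergence, where the truncated Dirichlet series is as large as a power of $x$; this is exactly what upgrades the classical $\log\log x$ estimate to $\log x$. No irrationality measure (indeed, no fact about $\zeta$ as an analytic function) will be needed. Fix once and for all a parameter $s\in(0,1)$; $s=\tfrac12$ is the convenient choice.

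First I would record the identity
$$\prod_{p\le x}\bigl(1-p^{-s}\bigr)^{-1}=\sum_{n\in S(x)}\frac{1}{n^{s}},\qquad S(x):=\{\,n\ge 1:\ p\mid n\Rightarrow p\le x\,\},$$
which holds term-by-term because the product is finite and each geometric series $\sum_{k\ge0}p^{-ks}$ converges for $s>0$. Since every integer $n\le x$ has all of its prime factors at most $x$, we have $\{1,\dots,\lfloor x\rfloor\}\subseteq S(x)$, and therefore
$$\sum_{n\le x}\frac{1}{n^{s}}\ \le\ \prod_{p\le x}\bigl(1-p^{-s}\bigr)^{-1}.$$

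Now I would bound the two ends. On the left, $n^{-s}\ge x^{-s}$ for every $n\le x$, so $\sum_{n\le x}n^{-s}\ge\lfloor x\rfloor x^{-s}\ge\tfrac12 x^{1-s}$ for $x\ge2$ (an integral comparison gives the sharper $\asymp x^{1-s}/(1-s)$, but this crude bound suffices). On the right, each of the $\pi(x)$ factors obeys $1-p^{-s}\ge1-2^{-s}>0$, hence $(1-p^{-s})^{-1}\le(1-2^{-s})^{-1}$, so that $\prod_{p\le x}(1-p^{-s})^{-1}\le(1-2^{-s})^{-\pi(x)}$. Combining the three displays gives $\tfrac12 x^{1-s}\le(1-2^{-s})^{-\pi(x)}$, and taking logarithms,
$$\pi(x)\ \ge\ \frac{(1-s)\log x-\log 2}{-\log(1-2^{-s})}\ =\ c_1\log x+c_0,$$
with $c_1=(1-s)/\bigl(-\log(1-2^{-s})\bigr)>0$ and $c_0=\log 2/\log(1-2^{-s})<0$; for $s=\tfrac12$ this reads $c_1=1/\bigl(2\log(2+\sqrt2)\bigr)$.

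There is no genuine obstacle here; the only points requiring a little care are the legitimacy of the rearrangement for $s<1$ (which is fine precisely because $S(x)$ is generated by the finitely many primes $\le x$), the direction of the inequality $(1-p^{-s})^{-1}\le(1-2^{-s})^{-1}$, and the elementary lower bound on the partial sum. One may optimize $c_1$ over $s\in(0,1)$, or replace the whole argument by the squarefree-part decomposition $n=a^2b$: every $n\le x$ is uniquely so written with $a\le\lfloor\sqrt x\rfloor$ and $b$ a squarefree divisor of $\prod_{p\le x}p$, of which there are $2^{\pi(x)}$, giving $x\le\lfloor\sqrt x\rfloor\,2^{\pi(x)}$ and hence the cleaner $c_1=1/(2\log 2)$. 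Both proofs rest on the same mechanism: a power of $x$ is squeezed underneath $C^{\pi(x)}$.
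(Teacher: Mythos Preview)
Your argument is correct, but it follows a route entirely different from the paper's. The paper fixes an \emph{even} integer $s>1$, sets $p_x/q_x=\prod_{p\le x}(1-p^{-s})$ in lowest terms, and sandwiches $|1/\zeta(s)-p_x/q_x|$ between an upper bound coming from the tail estimate of Lemma~\ref{lem9949.82} and a lower bound coming from the irrationality measure $\mu(\zeta(s))\ge2$ together with the $2$-adic estimate $q_x\ge 2^{\pi(x)-s-1}$ of Lemma~\ref{lem1300.82}; comparing the two yields $\pi(x)\ge c_1\log x+c_0$ with $c_1=(s-1)/((\mu+\varepsilon)\log 2)$. You instead work at $s\in(0,1)$, exploit the divergence of $\sum n^{-s}$ to force a power of $x$ beneath the finite Euler product, and then trap that product under $(1-2^{-s})^{-\pi(x)}$; no Diophantine input, no arithmetic of $q_x$, no analytic information about $\zeta$ is used. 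Your proof (and the squarefree-decomposition variant you mention) is the classical Euler/Erd\H{o}s mechanism and is both shorter and fully self-contained; what it forfeits is precisely the paper's advertised theme, namely that the irrationality measure of $\zeta(s)$ by itself already encodes a $\log x$ lower bound for $\pi(x)$.
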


\begin{proof} Fix an even number $s > 1$. Let $x \gg 1$ be a large number, and let 
\begin{equation} \label{eq2211.13}
\frac{1}{\zeta(s)}=\prod_{p \geq 2}\left ( 1 -\frac{1}{p^s} \right ) \qquad \qquad \text{ and } \qquad \qquad 
\frac{p_x}{q_x}=\prod_{p \leq x}\left ( 1 -\frac{1}{p^s} \right ), 
\end{equation}
where $\gcd(p_x,q_x)=1$. Then
\begin{equation} \label{eq2211.14}
\left |\frac{1}{\zeta(s)}-\frac{p_x}{q_x} \right | =\frac{1}{\zeta(s)}\left (1-\prod_{p >f(x)}\left ( 1 -\frac{1}{p^s} \right )^{-1} \right ),
\end{equation}
where $f(x)= x$. By Lemma \ref{lem9949.82}, the difference has the upper bound 

\begin{eqnarray} \label{eq2211.15}
\left |\frac{1}{\zeta(s)}-\frac{p_x}{q_x} \right | &=&\frac{1}{\zeta(s)}\left (1-\prod_{p >f(x)}\left ( 1 -\frac{1}{p^s} \right )^{-1} \right ) \\
&\leq &\frac{1}{(s-1)\zeta(s)} \frac{1}{f(x)^{s-1}}    \nonumber.
\end{eqnarray}

Let $\mu=\mu(\zeta(s))\geq 2$ be the irrationality measure of the number $\zeta(s)$, see Definition \ref{dfn9900.01}. Then, applying Lemma \ref{lem1300.82} yields
\begin{equation} \label{eq2211.33}
\frac{1}{q_x^{\mu+\varepsilon}}  \leq \frac{1}{2^{(\pi(x)-s-1)(\mu+\varepsilon)}} \leq \left |\frac{1}{\zeta(s)}-\frac{p_x}{q_x} \right |, 
\end{equation}
where $\varepsilon >0$ is an arbitrary small constant. Comparing \eqref{eq2211.15} and \eqref{eq2211.33} yield
\begin{equation} \label{eq2211.35}
 \frac{1}{2^{(\pi(x)-s-1)(\mu+\varepsilon)}} \leq  \frac{1}{(s-1)\zeta(s)} \frac{1}{f(x)^{s-1}}. 
\end{equation}
Solving for $\pi(x)$ yields
\begin{equation} \label{eq2211.37}
\frac{s-1}{(\mu+\varepsilon)\log 2 } \log f(x) +s+1+\frac{\log (s-1)\zeta(s)}{(\mu+\varepsilon)\log 2 }=c_1\log f(x) +c_0\leq \pi(x) 
\end{equation}
where $c_0$, and $c_1=(s-1)/(\mu+\varepsilon)\log 2>0$ are constants. 
\end{proof}
Significant improvement of the lower bound $\pi(x) \gg \log x$ to $\pi(x) \gg x/\log x$ can be achieved by setting $f(x)=2^{cx/\log x+o(x/\log x)}$, for some constant $c>0$. In fact, this can be viewed as a near proof of the Prime Number Theorem by elementary methods. The earlier related analysis appear almost simultaneously in \cite{BW07},  and \cite{KE07}. 
\section{For Rational Values $\zeta(s)/\zeta(t)$} 
\label{s2212}
\begin{thm} \label{thm111.11} Let $x\geq 2$ be a large number, and let $\pi(x)= \#\{p\leq x: p\text{ is prime} \}$. Then

\begin{equation}
 \pi(x)\geq c_3\log x +c_2,
\end{equation}
where $c_2$ and $c_3>0$ are constants.
\end{thm}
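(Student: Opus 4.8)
The plan is to repeat the proof of Theorem~\ref{thm2211.11} almost verbatim, with the irrational value $1/\zeta(s)$ replaced by a ratio $\alpha=\zeta(s)/\zeta(t)$ of two zeta values. Fix even integers $t>s>1$ and, for $x\ge 2$, let $P_x/Q_x$ denote the truncated Euler product written in lowest terms; by the Euler product formula,
\[\alpha=\prod_{p\ge 2}\frac{1-p^{-t}}{1-p^{-s}},\qquad\frac{P_x}{Q_x}=\prod_{p\le x}\frac{1-p^{-t}}{1-p^{-s}}=\frac{\prod_{p\le x}(p^{t}-1)}{\prod_{p\le x}p^{t-s}(p^{s}-1)},\qquad\gcd(P_x,Q_x)=1.\]
Since $t>s$, each factor obeys $1<\frac{1-p^{-t}}{1-p^{-s}}\le\frac{1}{1-p^{-s}}\le 1+2p^{-s}$, so $\alpha=\frac{P_x}{Q_x}\prod_{p>x}\frac{1-p^{-t}}{1-p^{-s}}$ with the tail product strictly between $1$ and $1+O(x^{-(s-1)})$. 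In particular $P_x/Q_x\ne\alpha$ for all $x$, and --- this is the analogue of Lemma~\ref{lem9949.82}, proved in the same way from $\prod_{p>x}(1+2p^{-s})\le\exp\big(2\sum_{p>x}p^{-s}\big)$ and $\sum_{p>x}p^{-s}\le\frac{1}{(s-1)x^{s-1}}$ ---
\[0<\alpha-\frac{P_x}{Q_x}=\frac{P_x}{Q_x}\Big(\prod_{p>x}\frac{1-p^{-t}}{1-p^{-s}}-1\Big)\le\frac{c_4}{x^{s-1}}\]
for a constant $c_4=c_4(s,t)>0$.

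Next I would insert the Diophantine input. Let $\mu=\mu(\alpha)\ge 1$ be the irrationality measure of $\alpha$ (Definition~\ref{dfn9900.01}); since $s,t$ are even, $\alpha$ is a rational multiple of $\pi^{s-t}$, so $\mu$ is finite (with $\mu=1$ precisely when $\alpha$ is rational). By the definition of $\mu$ together with $P_x/Q_x\ne\alpha$, for each $\varepsilon>0$ and all sufficiently large $x$ one has $\big|\alpha-P_x/Q_x\big|\ge Q_x^{-(\mu+\varepsilon)}$ (in the rational case $\alpha=a/b$ this is merely $|a/b-P/Q|\ge 1/(bQ)$). As in the proof of Theorem~\ref{thm2211.11}, one then bounds the denominator: $Q_x$ divides $\prod_{p\le x}p^{t-s}(p^{s}-1)$, and the cancellation bookkeeping behind Lemma~\ref{lem1300.82} upgrades this to $\log Q_x\le c_5\,\pi(x)$ for a constant $c_5=c_5(s,t)>0$, the analogue of Lemma~\ref{lem1300.82}. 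Chaining the three inequalities just obtained,
\[2^{-c_5(\mu+\varepsilon)\pi(x)}\le Q_x^{-(\mu+\varepsilon)}\le\Big|\alpha-\frac{P_x}{Q_x}\Big|\le\frac{c_4}{x^{s-1}}.\]

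Taking logarithms of the outermost estimate and solving for $\pi(x)$ gives
\[\pi(x)\ge\frac{s-1}{c_5(\mu+\varepsilon)\log 2}\,\log x-\frac{\log c_4}{c_5(\mu+\varepsilon)\log 2}=c_3\log x+c_2,\]
with $c_3=(s-1)/\big(c_5(\mu+\varepsilon)\log 2\big)>0$, which is the claim (the passage from ``$x$ large'' to ``$x\ge 2$'' is absorbed into $c_2$). The only genuinely new ingredient beyond Theorem~\ref{thm2211.11} is the denominator estimate $\log Q_x=O(\pi(x))$ for the ratio product, and that is the step I expect to be the main obstacle: the unreduced denominator $\prod_{p\le x}p^{t-s}(p^{s}-1)$ is bigger than the $\prod_{p\le x}p^{s}$ of Theorem~\ref{thm2211.11}, but it gets cut down by cancellation against the small-prime factors carried by the numerators $p^{t}-1$ (for even $s,t$, high powers of $2,3,\dots$ divide every $p^{s}-1$, hence every $p^{t}-1$). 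Making this cancellation quantitative --- and seeing that the evenness of $s,t$, equivalently the finiteness of $\mu(\alpha)$, is exactly what lets the analysis of Lemma~\ref{lem1300.82} extend --- is the delicate point.
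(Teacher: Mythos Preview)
Your argument diverges from the paper's in two places, and the second is a genuine gap.

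\emph{Choice of product.} The paper does not use a bare ratio $\zeta(s)/\zeta(t)$. It works with the specific rational value
\[
\frac{2}{5}=\frac{\zeta(4)}{\zeta(2)^2}=\prod_{p\ge 2}\frac{p^2-1}{p^2+1},
\]
so the truncated approximant is $p_x/q_x=\prod_{p\le x}(p^2-1)/(p^2+1)$ and the tail is controlled by Lemma~\ref{lem9949.84}. Your product $\prod_{p\le x}(p^t-1)/\bigl(p^{t-s}(p^s-1)\bigr)$ approximates a different number and has different arithmetic in numerator and denominator.

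\emph{Direction of the denominator bound.} This is the real issue. The paper's arithmetic input is Lemma~\ref{lem1300.84}, which is a \emph{lower} bound $q_x\ge 2^{\pi(x)-s-1}$, obtained by $2$-adic valuation: for odd $p$ one has $v_2(p^2-1)\ge 3$ but $v_2(p^2+1)=1$, so after reduction $2^{\pi(x)-O(1)}\mid p_x$. What your chain
\[
2^{-c_5(\mu+\varepsilon)\pi(x)}\le Q_x^{-(\mu+\varepsilon)}\le\Bigl|\alpha-\frac{P_x}{Q_x}\Bigr|
\]
actually requires is the \emph{opposite} inequality, an upper bound $Q_x\le 2^{c_5\pi(x)}$. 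The bookkeeping behind Lemmas~\ref{lem1300.82}--\ref{lem1300.84} cannot give this: it bounds a single prime's contribution to the numerator from below, not the full reduced denominator from above. The unreduced denominator $\prod_{p\le x}p^{t-s}(p^s-1)$ has logarithm of order $\theta(x)$, and arguing that cancellation against the numerator cuts it to $O(\pi(x))$ is not routine --- it is at least as strong as the conclusion you want. So the step you yourself flag as ``the delicate point'' is not an adaptation of Lemma~\ref{lem1300.82} but an unproved (and quite different) statement; the proposal has a genuine gap there.
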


\begin{proof} Let $x \gg 1$ be a large number, and let 
\begin{equation} \label{eq2212.13}
\frac{2}{5}=\prod_{p \geq 2} \frac{p^2-1}{p^2+1} \qquad \qquad \text{ and }\qquad \qquad \frac{p_x}{q_x}=\prod_{p \leq x} \frac{p^2-1}{p^2+1} , 
\end{equation}
where $\gcd(p_x,q_x)=1$. Then
\begin{equation} \label{eq2212.14}
\left |\frac{2}{5}-\frac{p_x}{q_x} \right | =\frac{2}{5} \left (1-\prod_{p > f(x)}\left ( \frac{p^2-1}{p^2+1} \right )^{-1} \right ) ,
\end{equation}
where $f(x)= x$. By the Euclidean Prime Number Theorem, there are infinitely many primes, so the product 
\begin{equation} \label{eq2211.30}
\prod_{p >f(x)}\left ( \frac{p^2-1}{p^2+1} \right )^{-1} \ne0.
\end{equation}
Applying Lemma \ref{lem1200.21} yields 
\begin{eqnarray} \label{eq2212.15}
\left |\frac{2}{5}-\frac{p_x}{q_x} \right | &=&\frac{2}{5} \left (1-\prod_{p >f(x)}\left ( \frac{p^2-1}{p^2+1} \right )^{-1} \right ) \\
&\leq &\frac{c_4}{f(x)}  \nonumber,
\end{eqnarray}
where $c_4>0$ is a constant.
Let $\mu=\mu(5/2)= 1$ be the irrationality measure of the number $5/2$, see Definition \ref{dfn9900.01}. Then, applying Lemma \ref{lem1300.84} yields
\begin{equation} \label{eq2212.33}
\frac{c_5}{q_x^{\mu+\varepsilon}}  \leq \frac{1}{2^{(\pi(x)-s-1)(\mu+\varepsilon)}} \leq \left |\frac{2}{5}-\frac{p_x}{q_x} \right |, 
\end{equation}
where $c_5>0$ is a constant. Comparing \eqref{eq2212.15} and \eqref{eq2212.33} yield
\begin{equation} \label{eq2211.35}
\frac{1}{2^{(\pi(x)-s-1)(\mu+\varepsilon )}} \leq   \frac{c_4}{f(x)^{}}. 
\end{equation}
Solving for $\pi(x)$ yields
\begin{equation} \label{eq2212.37}
\frac{1}{(\mu+\varepsilon)\log 2 } \log f(x) +s+1-\frac{\log c_4}{(1+\varepsilon)\log 2}=c_3\log f(x) +c_2\leq \pi(x) 
\end{equation}
where $c_2$, and $c_3=1/(\mu+\varepsilon)\log 2>0$ are constants. 

\end{proof}


\section{Infinite Products}\label{s1200}
The evaluations of some infinite prime products have rational values. The best known cases are generated by some ratios of zeta functions, and by some ratios of $L$-functions. The zeta function and the $L$-function are defined by
\begin{equation}
\zeta(s) =\sum_{n \geq 1} \frac{1}{n^s}=\prod_{p \geq 2}\left ( 1-\frac{1}{p^{s}} \right ) ^{-1},
\end{equation}
and
\begin{equation}
L(s, \chi) =\sum_{n \geq 1} \frac{\chi(n)}{n^s}=\prod_{p \geq 2}\left ( 1 -\frac{\chi(p)}{p^{s}} \right ) ^{-1},
\end{equation}
where $s \in \C$ is a complex variable, and $\chi$ is a character modulo $q\geq 1$, respectively.
\begin{lem} \label{lem1200.21} For any integer $n \geq1$, the following primes products are rational numbers. 

\begin{enumerate} [font=\normalfont, label=(\roman*)]
\item  For any integer $n \geq1$, 
\begin{equation}
\frac{\zeta(2n)^2}{\zeta(4n)} =\prod_{p \geq 2}\frac{\left (1-p^{-2n}\right ) ^{-2}}{\left (1-p^{-2n}\right ) ^{-1}}=\prod_{p \geq 2}\left ( 1 +\frac{2}{p^{2n}-1} \right ) .
\end{equation}
\item For any odd integers $m,n \geq1$, and a character $\chi$ modulo $q$,
\begin{equation}
\frac{L(m,\chi )^n}{L(mn,\chi)} =\prod_{p \geq 2}\frac{\left (1-\chi(p)p^{-m}\right ) ^{-n}}{\left (1-\chi(p)p^{-mn}\right ) ^{-1}}.
\end{equation}
\end{enumerate}
\end{lem}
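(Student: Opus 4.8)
The plan is to prove the two identities by the same two-step recipe: first derive the Euler-product expressions by a purely formal manipulation of absolutely convergent products, and then read off rationality from the classical closed forms for $\zeta$ and $L$ at integer arguments. Fix the integer $n\geq1$ (and, in part (ii), also $m$), so that all the arguments $2n,4n$ (respectively $m,mn$) exceed $1$ and the Euler products $\prod_{p\geq2}(1-p^{-s})^{-1}$ and $\prod_{p\geq2}(1-\chi(p)p^{-s})^{-1}$ converge absolutely; this legitimates multiplying, dividing and rearranging them termwise.

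For part (i), starting from $\zeta(s)=\prod_{p\geq2}(1-p^{-s})^{-1}$ and the factorization $1-p^{-4n}=(1-p^{-2n})(1+p^{-2n})$, I would compute
\begin{align*}
\frac{\zeta(2n)^2}{\zeta(4n)}
&=\prod_{p\geq2}\frac{(1-p^{-2n})^{-2}}{(1-p^{-4n})^{-1}}
=\prod_{p\geq2}\frac{1-p^{-4n}}{(1-p^{-2n})^2} \\
&=\prod_{p\geq2}\frac{1+p^{-2n}}{1-p^{-2n}}
=\prod_{p\geq2}\left(1+\frac{2}{p^{2n}-1}\right),
\end{align*}
which is exactly the asserted product. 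Rationality is then immediate from Euler's evaluation $\zeta(2k)=(-1)^{k+1}B_{2k}(2\pi)^{2k}/(2(2k)!)$: taking $k=n$ shows $\zeta(2n)\in\pi^{2n}\Q$, taking $k=2n$ shows $\zeta(4n)\in\pi^{4n}\Q$ and that it is nonzero, so the factor $\pi^{4n}$ cancels in $\zeta(2n)^2/\zeta(4n)$ and a rational number remains --- explicitly a ratio of Bernoulli numbers, $-B_{2n}^2(4n)!/(2\,((2n)!)^2 B_{4n})$, which for $n=1$ equals $5/2$.

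For part (ii), the product identity is a one-line consequence of $L(s,\chi)=\prod_{p\geq2}(1-\chi(p)p^{-s})^{-1}$: raise to the $n$-th power and divide by the Euler product for $L(mn,\chi)$, then combine local factors. For the rationality assertion I would take $\chi$ primitive modulo $q$ with $\chi(-1)=-1$ --- the parity forced by $m$ and $mn$ both being odd --- and invoke the classical formula expressing $L(k,\chi)$, for $k$ of that parity, as a rational multiple (in the field of values of $\chi$) of $\tau(\chi)\,\pi^{k}$, where $\tau(\chi)$ is the Gauss sum and the rational factor comes from a generalized Bernoulli number $B_{k,\bar\chi}$. Substituting $k=m$ and $k=mn$, the powers of $\pi$ cancel in $L(m,\chi)^n/L(mn,\chi)$, leaving $\tau(\chi)^{\,n-1}$ times an element of that field; since $n$ is odd, $n-1$ is even, so $\tau(\chi)^{n-1}=(\tau(\chi)^2)^{(n-1)/2}$, and for quadratic $\chi$ this equals $(\chi(-1)q)^{(n-1)/2}=(-q)^{(n-1)/2}\in\Q$, so the whole quotient is genuinely rational.

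The Euler-product bookkeeping is routine; the step I expect to be the real obstacle is the rationality in part (ii). The product identity $L(m,\chi)^n/L(mn,\chi)=\prod_{p\geq2}(\cdots)$ holds for every $\chi$, but the \emph{rational} value needs hypotheses that should be made explicit: $\chi$ primitive and odd, so that both $L(m,\chi)$ and $L(mn,\chi)$ have the transparent shape ``Gauss sum $\times$ power of $\pi$ $\times$ algebraic'' (for an even $\chi$ evaluated at odd arguments no such closed form is available), and the surviving factor $\tau(\chi)^{n-1}$ must lie in $\Q$ --- which is precisely why ``$n$ odd'' is indispensable, and why for a non-quadratic $\chi$ one should read ``rational'' as ``lying in the field of values $\Q(\chi)$''. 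Part (i) is the degenerate case $\chi$ trivial, where $\tau(\chi)=1$ and all of these subtleties disappear.
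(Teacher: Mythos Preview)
Your proof of part (i) is essentially identical to the paper's: manipulate the Euler product using $1-p^{-4n}=(1-p^{-2n})(1+p^{-2n})$, then invoke Euler's formula $\zeta(2k)=(-1)^{k+1}(2\pi)^{2k}B_{2k}/(2(2k)!)$ to see that the $\pi^{4n}$ cancels, arriving at the same Bernoulli-number ratio the paper displays.

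For part (ii) the paper's proof consists of the single sentence ``The proof of this case has similar calculations but lengthier,'' so your argument is substantially more complete rather than merely different. You correctly separate the trivial step (the Euler-product identity, immediate from the definition) from the nontrivial one (rationality), and you make explicit the hypotheses the paper's statement suppresses: one needs $\chi$ primitive and \emph{odd} so that the closed form $L(k,\chi)\in\tau(\chi)\,\pi^{k}\,\Q(\chi)$ applies at both $k=m$ and $k=mn$, and one further needs $\chi$ quadratic so that the surviving factor $\tau(\chi)^{\,n-1}=(\chi(-1)q)^{(n-1)/2}$ lands in $\Q$ rather than merely in $\Q(\chi)$. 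These restrictions are implicit in the paper's worked example (the quadratic character modulo $4$) but absent from its lemma statement; your proposal is right to flag them.

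One small gap: your opening sentence asserts that ``all the arguments \ldots\ exceed $1$,'' but the paper explicitly allows $m=1$ (its example has $m=1$, $n=3$), and there the Euler product for $L(1,\chi)$ is not absolutely convergent. The identity still holds for non-principal $\chi$, but you should either restrict to $m\geq2$ or remark that the $m=1$ case follows by the known (conditional) convergence of $\prod_p(1-\chi(p)p^{-1})^{-1}$ to $L(1,\chi)$.
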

\begin{proof} (i) Let $B_{2n}$ be the $2n$th Bernoulli number. Then, zeta ratio has a rational value
\begin{eqnarray} \label{eq1200.53}
\frac{\zeta(2n)^2}{\zeta(4n)} 
&=&\left( \frac{(-1)^{n-1}(2 \pi)^{2n} B_{2n}}{2(2n)!}\right ) ^{2}
\left( \frac{(-1)^{2n-1}2(4n)!}{(2 \pi)^{4n} B_{4n}}\right ) \nonumber\\
&=&\frac{ (4n)!B_{2n}^2}{2((2n)!)^2 B_{4n}}\nonumber
\end{eqnarray}
since any $B_k$ is rational. On the other hand, the infinite product has the expression
\begin{eqnarray} \label{eq1200.55}
\frac{\zeta(2n)^2}{\zeta(4n)}  
&=&\prod_{p \geq 2}\frac{\left (1-p^{-2n}\right ) ^{-2}}{\left (1-p^{-4n}\right ) ^{-1}} \nonumber\\
&=&\prod_{p \geq 2}\left ( 1 +\frac{2}{p^{2n}-1} \right ) \nonumber.
\end{eqnarray}
as claimed. (ii) The proof of this case has similar calculations but lengthier.
\end{proof}
\begin{exa}  {\normalfont 
\begin{enumerate} [font=\normalfont, label=(\arabic*)] A pair of rational primes products.
\item  The simplest case occurs for $n =2$. The zeta values are $\zeta(2)=\pi^2/6$ and $\zeta(4)=\pi^4/90$. Thus,
\begin{equation} \label{eq1200.55}
\frac{5}{2} =\frac{\zeta(2)^2}{\zeta(4)} 
=\prod_{p \geq 2}\frac{\left (1-p^{-2}\right ) ^{-2}}{\left (1-p^{-4}\right ) ^{-1}} \prod_{p \geq 2}\frac{p^2+1}{p^2-1}.
\end{equation}
\item The simplest case occurs for $m =1$, $n =3$,  and the quadratic character $\chi$ modulo $q=4$. The $L$-function values are $L(1,\chi)=\pi/4$ and $L(3)=\pi^3/32$. Thus,
\begin{equation}
\frac{1}{2} =\frac{L(1,\chi )^3}{L(3,\chi)} =\prod_{p \geq 2}\frac{\left (1-\chi(p)p\right ) ^{-3}}{\left (1-\chi(p)p^{-3}\right ) ^{-1}}
=\prod_{p \equiv 1 \bmod 4}\frac{p^3-1}{(p-1)^{3}}\prod_{p \equiv 3 \bmod 4}\frac{p^3+1}{(p+1)^{3}}.
\end{equation}
\end{enumerate}
}
\end{exa}
\section{Partial Products}
\begin{lem} \label{lem9949.82} Fix a real number $s\geq 1$. Let $x \gg 1$ be a large number. Then, 
\begin{enumerate} [font=\normalfont, label=(\roman*)]
\item \begin{equation}
1-\prod_{p > x}\left ( 1 -\frac{1}{p^s} \right )^{-1} \leq \sum_{n >x} \frac{1}{n^s}=\frac{1}{s-1}\frac{1}{x^{s-1}},
\end{equation}
\item \begin{equation}
\zeta(s)- \sum_{n \leq x} \frac{1}{n^s}=O\left (\frac{1}{x^{s-1}}\right ).
\end{equation}
\end{enumerate}
\end{lem}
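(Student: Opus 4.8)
The plan is to prove Lemma \ref{lem9949.82} by reducing both statements to standard tail estimates for the series $\sum 1/n^s$. For part (ii), I would write $\zeta(s) - \sum_{n \leq x} 1/n^s = \sum_{n > x} 1/n^s$ and bound this tail by comparison with an integral: since $t \mapsto t^{-s}$ is decreasing on $[1,\infty)$ for $s \geq 1$, we have $\sum_{n > x} n^{-s} \leq \int_x^\infty t^{-s}\, dt = \frac{1}{s-1} x^{-(s-1)}$ for $s > 1$, which is $O(x^{-(s-1)})$ as claimed. (The case $s = 1$ is degenerate here, but the lemma is only invoked for even $s > 1$ in Theorem \ref{thm2211.11}, so I would either restrict to $s > 1$ or note that the $O$-statement is vacuous/trivial at $s=1$.)

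For part (i), the key observation is that the finite tail product $\prod_{p > x}(1 - p^{-s})^{-1}$ is a subproduct of the full Euler product, so it lies in the interval $(1, \zeta(s)]$; in particular it exceeds $1$, hence $1 - \prod_{p>x}(1-p^{-s})^{-1} < 0 \leq \sum_{n>x} n^{-s}$ and the inequality holds trivially in that regime. But the useful content — matching the bound actually used in \eqref{eq2211.15} — is the opposite: I would instead show $\prod_{p > x}(1 - p^{-s})^{-1} - 1 \leq \sum_{n > x} n^{-s}$, i.e.\ that the excess of the tail product over $1$ is controlled by the series tail. Expanding the product $\prod_{p > x}(1-p^{-s})^{-1} = \sum_{n \in S_x} n^{-s}$ where $S_x$ is the set of positive integers all of whose prime factors exceed $x$ (this set includes $n=1$), we get $\prod_{p>x}(1-p^{-s})^{-1} - 1 = \sum_{n \in S_x, n > 1} n^{-s} \leq \sum_{n > x} n^{-s}$, since every $n \in S_x$ with $n > 1$ has a prime factor $> x$ and is therefore itself $> x$. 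Then part (ii)'s integral bound gives the displayed $\frac{1}{s-1} x^{-(s-1)}$.

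Two small technical points deserve care. First, for the product expansion to be valid one needs absolute convergence, which holds for $s > 1$; the paper's statement says $s \geq 1$, so I would silently assume $s > 1$ (consistent with its only use) or flag the restriction. Second, the chain $\sum_{n > x} n^{-s} = \frac{1}{s-1} x^{-(s-1)}$ is written as an equality in the lemma but is really an inequality $\leq$; I would present it as $\sum_{n>x} n^{-s} \leq \frac{1}{s-1} x^{-(s-1)}$ via the integral comparison and let that propagate. The main obstacle — such as it is — is purely cosmetic: aligning the direction of the inequality in (i) with what is needed downstream, since as literally stated the inequality in (i) is the trivial direction and the substantive estimate is on $\prod_{p>x}(1-p^{-s})^{-1} - 1$, which is exactly the quantity appearing (with its sign flipped) inside the parentheses of \eqref{eq2211.14}. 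No deep input is required beyond monotone integral comparison and the multiplicativity of the Euler factors.
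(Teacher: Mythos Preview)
Your argument is correct and self-contained. The paper itself does not give a proof here at all: it simply writes ``These are standard results, see \cite[Theorem 4.11]{TE86}'' and moves on. What you have supplied --- integral comparison for the tail $\sum_{n>x} n^{-s}$ and the Euler-product expansion $\prod_{p>x}(1-p^{-s})^{-1} = \sum_{n\in S_x} n^{-s}$ to bound the product tail --- is exactly the standard argument one would find behind such a citation, so in substance you are aligned with the paper. Your additional observations are also on target: the inequality in (i) as literally printed is the trivial direction (the left side is negative), the displayed ``$=$'' should be ``$\leq$'', and the hypothesis should read $s>1$ rather than $s\geq 1$; these are genuine infelicities in the statement that your write-up handles correctly and that do not affect the downstream use in Theorem~\ref{thm2211.11}.
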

\begin{proof} These are standard results, see \cite[Theorem 4.11]{TE86}.
\end{proof}

\begin{lem} \label{lem9949.84} Fix a real number $s\geq 1$. Let $x \gg 1$ be a large number. Then, 
\begin{equation}
1-\prod_{p > x}\left ( \frac{p^{2s}-1}{p^{2s}+1} \right )^{-1} \leq \sum_{n >x} \frac{1}{n^{2s}}=\frac{2}{2s-1}\frac{1}{x^{2s-1}} +O\left (\frac{1}{x^{s-1}}\right ).
\end{equation}
\end{lem}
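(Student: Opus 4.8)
The plan is to prove Lemma \ref{lem9949.84} by reducing the product estimate to a tail-sum estimate, exactly as in Lemma \ref{lem9949.82}(i), and then expanding the summand $1/(p^{2s}-1)$ type term into a geometric-like series whose leading piece is $\sum_{n>x} n^{-2s}$ and whose remainder is absorbed into the error term. First I would note the elementary inequality $1-\prod_{p>x}(1-a_p)^{-1}\le \sum_{p>x} a_p/(1-a_p)$ valid whenever the $a_p\in[0,1)$ are small enough for the product to converge; here one checks that
\begin{equation}
1-\frac{p^{2s}-1}{p^{2s}+1}=\frac{2}{p^{2s}+1},\qquad\text{so}\qquad \left(\frac{p^{2s}-1}{p^{2s}+1}\right)^{-1}-1=\frac{2}{p^{2s}-1}.
\end{equation}
Hence $1-\prod_{p>x}\bigl((p^{2s}-1)/(p^{2s}+1)\bigr)^{-1}\le \prod_{p>x}\bigl(1+\tfrac{2}{p^{2s}-1}\bigr)-1\le \sum_{p>x}\tfrac{2}{p^{2s}-1}+O\!\bigl(\text{quadratic}\bigr)$, and in fact the cleanest route is to bound the left side directly by $\sum_{n>x} 2/(n^{2s}-1)$ after extending the product over primes to a product over all integers (which only makes the product smaller and the complementary quantity larger), mirroring the proof of Lemma \ref{lem9949.82}(i) cited from \cite[Theorem 4.11]{TE86}.

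Next I would handle the arithmetic identity $\sum_{n>x} 2/(n^{2s}-1)=\sum_{n>x} n^{-2s}\cdot 2/(1-n^{-2s})$ and expand $2/(1-n^{-2s})=2+2n^{-2s}+2n^{-4s}+\cdots$, giving
\begin{equation}
\sum_{n>x}\frac{2}{n^{2s}-1}=2\sum_{n>x}\frac{1}{n^{2s}}+2\sum_{n>x}\frac{1}{n^{4s}}+\cdots=\frac{2}{2s-1}\frac{1}{x^{2s-1}}+O\!\left(\frac{1}{x^{4s-1}}\right),
\end{equation}
where the main term comes from Lemma \ref{lem9949.82}(ii) applied with exponent $2s$ (so that $\sum_{n>x}n^{-2s}=\tfrac{1}{2s-1}x^{-(2s-1)}+O(x^{-2s})$, and the factor $2$ is carried through), and all the higher geometric-series terms, being $O(x^{-(4s-1)})$, are dominated by the stated error $O(x^{-(s-1)})$ since $4s-1\ge s-1$ for $s\ge 1$. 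Actually one sees the stated error term $O(x^{-(s-1)})$ is extremely generous — even $O(x^{-2s})$ would do for $s\ge 1$ — so the bookkeeping is forgiving; I would simply collect every non-leading contribution into $O(x^{-(s-1)})$ and be done.

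The one genuine point requiring a little care — the main obstacle, such as it is — is justifying the passage from the prime product to the integer tail sum with the correct inequality direction: one must verify that replacing $\prod_{p>x}$ by the product over \emph{all} integers $n>x$ keeps $\prod(\cdot)^{-1}\ge$ the partial version, hence $1-\prod_{p>x}(\cdots)^{-1}\le 1-\prod_{n>x}(\cdots)^{-1}$, and then bound $1-\prod_{n>x}\bigl((n^{2s}-1)/(n^{2s}+1)\bigr)^{-1}$ above by $\sum_{n>x}2/(n^{2s}-1)$ using $1-\prod(1-b_n)\le\sum b_n$ with $b_n=2/(n^{2s}+1)$ after rewriting, or more directly $\prod(1+c_n)-1\le \sum c_n\exp(\sum c_n)$ and noting $\sum_{n>x}c_n\to 0$. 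Since each $c_n=2/(n^{2s}-1)$ is $O(n^{-2s})$ and $\sum_{n>x}c_n=O(x^{-(2s-1)})=o(1)$, the exponential factor is $1+o(1)$ and the estimate $\sum_{n>x}c_n(1+o(1))$ folds into the main term plus the generous error. Assembling these three steps — product-to-sum reduction, geometric expansion of $2/(n^{2s}-1)$, and invocation of Lemma \ref{lem9949.82}(ii) for the leading asymptotic — yields the claimed bound, and I would write the final display chaining $1-\prod_{p>x}(\cdots)^{-1}\le \sum_{n>x}n^{-2s}\cdot\tfrac{2}{1-n^{-2s}}=\tfrac{2}{2s-1}x^{-(2s-1)}+O(x^{-(s-1)})$.
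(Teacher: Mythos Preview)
Your approach is essentially the same as the paper's: rewrite $\bigl(\tfrac{p^{2s}-1}{p^{2s}+1}\bigr)^{-1}=1+\tfrac{2}{p^{2s}-1}$, pass from the product to a tail sum over integers, and invoke the integral/tail estimate of Lemma~\ref{lem9949.82}; the paper compresses all of this into the phrase ``routine calculations'' and a three-line display, while you spell out the product-to-sum inequality and the geometric expansion of $2/(n^{2s}-1)$. One small slip to fix: when you extend the product from primes to all integers $n>x$, each extra factor exceeds $1$, so $\prod_{n>x}(\cdots)^{-1}\geq\prod_{p>x}(\cdots)^{-1}$ gives $1-\prod_{p>x}\geq 1-\prod_{n>x}$, the reverse of what you wrote --- but since what is actually needed (here and in the paper) is a bound on $\prod-1=\bigl|1-\prod\bigr|$, your subsequent use of $\prod(1+c_n)-1\leq\sum c_n\,e^{\sum c_n}$ together with $\sum_{p>x}c_p\leq\sum_{n>x}c_n$ is the right move and the argument goes through.
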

\begin{proof} Routine calculations yield
\begin{eqnarray}
1-\prod_{p > x}\left ( \frac{p^{2s}-1}{p^{2s}+1} \right )^{-1} &=&
1-\prod_{p > x}\left (1+ \frac{2}{p^{2s}-1} \right ) \\
&\leq &\sum_{n >x} \frac{1}{n^{2s}}\nonumber \\
&=&\frac{2}{2s-1}\frac{1}{x^{2s-1}} +O\left (\frac{1}{x^{2s-2}}\right ) \nonumber.
\end{eqnarray}

\end{proof}
\section{Rational Approximations} \label{s1300}
The denominator of the Euler approximation \eqref{eq1300.50} has a trivial bound $q_x \leq( x!)^2$. Sharper and more effective descriptions of the rational approximations generated by some Euler products are provided here.
\begin{lem} \label{lem1300.82} Let $s\geq 1$ be a fixed integer, and let $x \gg 1$ be a large number. Then, the product
\begin{equation}\label{eq1300.50}
\frac{p_x}{q_x}=\prod_{p \leq x}\left ( 1 -\frac{1}{p^s} \right ) ,
\end{equation}
where $\gcd(p_x,q_x)=1$, satisfies the followings properties.
\begin{enumerate} [font=\normalfont, label=(\roman*)]
\item  The integer $p_x\geq 2^{\pi(x)-s-1}$ has exponential growth.
\item The integer $q_x\geq 2^{\pi(x)-s-1}$ has exponential growth.
\end{enumerate}
\end{lem}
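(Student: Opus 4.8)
The plan is to treat $p_x$ and $q_x$ separately, since they call for two quite different mechanisms: for the numerator a single small prime (namely $2$) already carries the entire bound, whereas for the denominator one must exhibit many \emph{large} primes in $(x/2,x]$ that survive the passage to lowest terms. Throughout, write the unreduced fraction as $\prod_{p\le x}(1-p^{-s})=N/D$ with $N=\prod_{p\le x}(p^{s}-1)$ and $D=\prod_{p\le x}p^{s}$, so that $p_x=N/\gcd(N,D)$, $q_x=D/\gcd(N,D)$, and for every prime $p\le x$,
\[
v_p(p_x)=\max\bigl(0,\,v_p(N)-s\bigr),\qquad v_p(q_x)=\max\bigl(0,\,s-v_p(N)\bigr),
\]
where $v_p$ is the $p$-adic valuation.

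\emph{Part (i).} Since $p^{s}-1$ is even for every odd prime $p$, one gets $v_2(N)=\sum_{p\le x}v_2(p^{s}-1)\ge \pi(x)-1$, one factor of $2$ coming from each of the $\pi(x)-1$ odd primes $p\le x$. As $v_2(D)=s$, the reduction to lowest terms removes at most $s$ factors of $2$ from $N$, so $v_2(p_x)\ge \pi(x)-1-s$, and hence $p_x\ge 2^{v_2(p_x)}\ge 2^{\pi(x)-s-1}$. This step costs essentially nothing, using only the triviality $2\mid p^{s}-1$.

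\emph{Part (ii).} Here $q_x$ is odd (the same bookkeeping gives $v_2(q_x)=0$), so a single small prime is useless and one needs a product of many primes. I would take
\[
S(x)=\bigl\{\,p\ \text{prime}:\ \tfrac{x+1}{2}<p\le x,\ \gcd(s,p-1)\mid 2\,\bigr\}.
\]
For $p\in S(x)$ the congruence $X^{s}\equiv 1\pmod p$ has at most $\gcd(s,p-1)\le 2$ solutions, lying in $\{1,-1\}$; hence $p\mid q^{s}-1$ would force the prime $q\le x$ to satisfy $q\equiv\pm1\pmod p$. But for $p>(x+1)/2$ the only integers in $[2,x]$ that are $\equiv\pm1\pmod p$ are $p-1$ and $p+1$, both even and larger than $2$, hence composite, while $2p\pm1>x$. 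Therefore $v_p(N)=0$, so $v_p(q_x)=s$, for every $p\in S(x)$, whence
\[
q_x\ \ge\ \prod_{p\in S(x)}p^{\,s}\ \ge\ (x/2)^{\,s\,|S(x)|}.
\]
The conditions cutting out $S(x)$ amount to $p\not\equiv1\pmod\ell$ for each odd prime $\ell\mid s$, together with $p\equiv3\pmod4$ when $4\mid s$; these are finitely many independent congruence restrictions, each met by a positive proportion of primes, so by Dirichlet's theorem (or any effective prime number theorem for arithmetic progressions) the primes satisfying them have positive density $\delta_s>0$, and since $(x/2,x]$ already contains $\sim\pi(x)/2$ primes we get $|S(x)|\ge c\,\pi(x)$ for some $c=c(s)>0$ and all large $x$. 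Then $\log q_x\ge s\,c\,\pi(x)\log(x/2)$, which dwarfs $(\pi(x)-s-1)\log 2$ once $x$ is large, giving $q_x\ge 2^{\pi(x)-s-1}$ (indeed far more).

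The one genuinely delicate point is the claim $v_p(N)=0$ for $p\in S(x)$, and, bundled with it, the very shape of $S(x)$: when $s$ is even one cannot hope for $\gcd(s,p-1)=1$, and the reason for tolerating $\gcd(s,p-1)=2$ is exactly that the extra root $-1$ still produces no prime $q\le x$, because $p-1$ and $2p-1$ fail to be prime in the relevant window. A subsidiary point is calibrating the interval $((x+1)/2,x]$ so that it is simultaneously wide enough to contain $\gg\pi(x)$ primes (immediate from the prime number theorem) and narrow enough to force $2p\pm1>x$. The only external ingredient is the positivity of the density of primes obeying a fixed finite set of congruences; everything else is elementary valuation bookkeeping.
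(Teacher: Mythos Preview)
Your part (i) matches the paper's proof: both track the $2$-adic valuation of the unreduced numerator $N=\prod_{p\le x}(p^s-1)$ and denominator $D=\prod_{p\le x}p^s$ to obtain $v_2(p_x)\ge\pi(x)-s-1$. For part (ii), however, the paper takes a one-line shortcut you overlooked: since every factor $1-p^{-s}<1$, the product satisfies $p_x/q_x\le 1$, hence $q_x\ge p_x\ge 2^{\pi(x)-s-1}$ immediately from (i). Your route---isolating primes $p\in((x+1)/2,x]$ with $\gcd(s,p-1)\mid 2$, verifying $v_p(N)=0$ for each, and counting them via the prime number theorem in arithmetic progressions---is correct and in fact yields the far stronger estimate $\log q_x\gg \pi(x)\log x$. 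The trade-off is that the paper invokes this lemma to deduce $\pi(x)\gg\log x$; importing PNT in progressions at this stage renders that downstream application vacuous, whereas the paper's trivial inequality $q_x\ge p_x$ keeps the lemma entirely elementary and the overall argument self-contained.
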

\begin{proof} For a large number $x \gg 1$, consider
\begin{equation}
A_x=\prod_{p \leq x}\left ( p^s-1 \right ) \qquad \text{ and } \qquad B_x=\prod_{p \leq x} p^s.
\end{equation}

Here, the integer $A_x$ is divisible by an increasing high power of $2$ as $ x \to \infty$, but the integer $B_x$ is divisible by a small fixed power of $2$:  
\begin{equation}
2^{\pi(x)-1} \mid A_x \qquad \text{ and } \qquad  2^{s} \mid \mid B_x .
\end{equation}
Thus, the even part of the product can be precisely factored as 
\begin{equation} \label{eq1300.80}
\prod_{p \leq x}\left ( \frac{p^s-1}{p^s} \right )=\frac{A_x}{B_x}= \frac{2^{\pi(x)-1}A}{2^{s}B}= \frac{2^{\pi(x)-s-1}A}{B}=\frac{p_x}{q_x},
\end{equation}
where $A>1$ and $B>1$ are integers such that $\gcd(2,B)=1$, and $\gcd(p_x,q_x)=1$. \\

(i) To verify this statement, observe that in \eqref{eq1300.80}, the integer $B$ is odd and that these integers are nearly relatively prime, $1 \leq \gcd(A,B)\leq A$. Hence,
\begin{equation}
p_x =\frac{2^{\pi(x)-s-1}A}{\gcd(A,B)}\geq 2^{\pi(x)-s-1} .
\end{equation}
(ii) To verify this statement, observe that $1/2\leq p_x/q_x\leq1$. Equivalently,
\begin{equation}
\frac{q_x}{2}\leq p_x\leq q_x.
\end{equation}
Hence,
\begin{equation}
q_x \geq p_x \geq 2^{\pi(x)-s-1} .
\end{equation}

These complete the verifications of (i) and (ii).
\end{proof}

\begin{lem} \label{lem1300.84} Let $s\geq 1$ be a fixed integer, and let $x \gg 1$ be a large number. Then, the product
\begin{equation}
\frac{p_x}{q_x}=\prod_{p \leq x} \frac{p^2-1}{p^2+1},
\end{equation}
where $\gcd(p_x,q_x)=1$, satisfies the followings properties.
\begin{enumerate} [font=\normalfont, label=(\roman*)]
\item  The integer $p_x\geq 2^{\pi(x)-s-1}$ has exponential growth.
\item The integer $q_x\geq 2^{\pi(x)-s-1}$ has exponential growth.
\end{enumerate}
\end{lem}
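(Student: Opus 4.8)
The plan is to follow the template of the proof of Lemma~\ref{lem1300.82}, the only new ingredient being the $2$-adic bookkeeping for the numerator and denominator of the unreduced product. First I would set
\[
A_x=\prod_{p \leq x}\left(p^2-1\right) \qquad \text{ and } \qquad C_x=\prod_{p \leq x}\left(p^2+1\right),
\]
so that $\prod_{p \leq x}\tfrac{p^2-1}{p^2+1}=A_x/C_x$. The prime $2$ contributes the odd factors $3$ and $5$, hence nothing $2$-adically, while every odd prime $p\leq x$ satisfies $p^2\equiv 1\pmod{8}$. Therefore each odd prime gives $p^2-1=(p-1)(p+1)$ with $2$-adic valuation at least $3$ (one of $p\pm 1$ is divisible by $4$, the other by $2$), whereas $p^2+1\equiv 2\pmod{8}$ has $2$-adic valuation exactly $1$. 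Summing over the $\pi(x)-1$ odd primes $\leq x$ gives $2^{3(\pi(x)-1)}\mid A_x$ and $2^{\pi(x)-1}\mid\mid C_x$.

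Next, factoring out the even parts exactly as in the proof of Lemma~\ref{lem1300.82}, I would write
\[
\prod_{p \leq x}\frac{p^2-1}{p^2+1}=\frac{A_x}{C_x}=\frac{2^{3(\pi(x)-1)}A}{2^{\pi(x)-1}C}=\frac{2^{2(\pi(x)-1)}A}{C}=\frac{p_x}{q_x},
\]
where $C$ is odd, $A\geq 1$, and $\gcd(p_x,q_x)=1$. Since $C$ is odd, $\gcd\!\left(2^{2(\pi(x)-1)}A,\,C\right)=\gcd(A,C)$ divides $A$, so the reduced numerator keeps the full power of $2$:
\[
p_x=\frac{2^{2(\pi(x)-1)}A}{\gcd(A,C)}\geq 2^{2(\pi(x)-1)}\geq 2^{\pi(x)-s-1},
\]
the last step holding for every fixed integer $s\geq 1$. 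This proves (i). For (ii) I would observe that each factor $\tfrac{p^2-1}{p^2+1}<1$ and there is at least one factor once $x\geq 2$, so $0<p_x/q_x<1$ (indeed the product decreases from $3/5$ toward $2/5$, so $\tfrac{2}{5}\leq p_x/q_x\leq\tfrac{3}{5}$); in particular $q_x>p_x$, whence $q_x\geq p_x\geq 2^{\pi(x)-s-1}$.

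The step that requires genuine attention --- and the only real obstacle here --- is the $2$-adic count: one must check that the numerator accumulates at least three factors of $2$ per odd prime while the denominator accumulates exactly one, so that after cancellation the surviving $2$-power in $p_x$ (and the trivial bound $q_x\geq p_x$) still dominates $2^{\pi(x)-s-1}$ with room to spare. The remainder is a verbatim repetition of the reduction argument in Lemma~\ref{lem1300.82}.
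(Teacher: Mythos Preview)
Your proof is correct and follows essentially the same approach as the paper: compute the $2$-adic valuation of the unreduced numerator and denominator, note that the denominator's odd part cannot cancel the accumulated power of $2$, and then use $q_x\geq p_x$ for part (ii). Your $2$-adic count $2^{3(\pi(x)-1)}\mid A_x$ (from $8\mid p^2-1$ for odd $p$) is in fact slightly sharper than the paper's $2^{2\pi(x)-2}\mid A_x$ (from $(p-1)(p+1)$ each even), but this is a harmless refinement of the same idea and the surplus is discarded anyway when passing to the weaker bound $2^{\pi(x)-s-1}$.
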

\begin{proof} For a large number $x \gg 1$, consider
\begin{equation}
A_x=\prod_{p \leq x}\left ( p^{2s}-1 \right ) =\prod_{p \leq x}\left ( p^{s}-1 \right )\left ( p^{s}+1 \right )\qquad \text{ and } \qquad B_x=\prod_{p \leq x} \left ( p^{2s}+1 \right ).
\end{equation}

Here, the integer $A_x$ is divisible by an increasing double high power of $2$ as $ x \to \infty$, but the integer $B_x$ is divisible by a high power of $2$:  
\begin{equation}\label{eq1300.85}
2^{2\pi(x)-2} \mid A_x \qquad \text{ and } \qquad  2^{\pi(x)-1} \mid \mid B_x .
\end{equation}
The last expression in \eqref{eq1300.85} follows from $p^{2s}+1\equiv 2 \bmod 4$ for $s \in \N$. Thus, the even part of the product can be precisely factored as 
\begin{equation} \label{eq1300.87}
\prod_{p \leq x}\left (\frac{\left ( p^{s}-1 \right )\left ( p^{s}+1 \right )}{ p^{2s}+1 } \right )=\frac{A_x}{B_x}= \frac{2^{2\pi(x)-2}A}{2^{\pi(x)-1}B}= \frac{2^{\pi(x)-s-1}A}{B}=\frac{p_x}{q_x},
\end{equation}
where $A>1$ and $B>1$ are integers such that $\gcd(2,B)=1$, and $\gcd(p_x,q_x)=1$. \\

(i) To verify this statement, observe that in \eqref{eq1300.87}, the integer $B$ is odd, (follows from $p^{2s}+1\equiv 2 \bmod 4$), and the these integers are nearly relatively prime, $1 \leq \gcd(A,B)\leq A$. Hence,
\begin{equation}
p_x =\frac{2^{\pi(x)-s-1}A}{\gcd(A,B)}\geq 2^{\pi(x)-s-1} .
\end{equation}
(ii) To verify this statement, observe that $1/4\leq p_x/q_x\leq1$. Equivalently,
\begin{equation}
\frac{q_x}{4}\leq p_x\leq q_x.
\end{equation}
Hence,
\begin{equation}
q_x \geq p_x \geq 2^{\pi(x)-s-1} .
\end{equation}

These complete the verifications of (i) and (ii).
\end{proof}
\section{Euclidean Sequences} \label{s2299}
The Euclidean sequence $q_n=\max \{ p \mid n!+1\}$ established the existence of infinitely many primes around 23 centuries ago. The first few terms of the sequence are these: 
\begin{equation}  \label{eq2299.03}
2, \quad 3, \quad 7, \quad 5, \quad 11,\quad 103, \quad 71, \quad 61, \quad 661,\quad 19, \quad 269,\quad 329891, \quad 39916801, \quad 13, \quad 83, \ldots,
\end{equation}
The primes are generated in a chaotic manner. Many variations of this sequence are studied in the literature, see \cite{BA16} and similar references. \\

The Hermite sequence $q_n=\min \{ q \mid (p-1)!+1\}$, where $q\geq 2$ ranges over the primes, generates all the primes numbers, and the primes are generated in increasing order. These nice properties follow from Wilson theorem $(p-1)!+1 \equiv 0 \bmod p$, see \cite[p.\  303]{DL12} for more details. The first few terms of the sequence are these: 
\begin{equation} \label{eq2299.06}
2, \quad 3, \quad 5, \quad 7, \quad 11,\quad 13, \quad 17, \quad 19, \quad 23,\quad 29, \quad 31,\quad 37, \quad 41, \quad 43, \ldots,
\end{equation}
However, the opposite Hermite sequence $q_n=\max \{ q \mid (p-1)!+1\}$  has more complex properties, and generates primes in a chaotic manner.\\

About 2 centuries ago Euler introduced a new primes counting method based on the prime harmonic sum
\begin{equation}
\sum_{p\leq x}\frac{1}{p} \to \log \log x
\end{equation}
as $x \to \infty$, see \cite{EL88}. More recently, about a century ago, Hadamard and delaVallee Poussin independently proved using different methods, that the sequence of increasing prime numbers \eqref{eq2299.06} up to a fixed number $x\ge2 $ has 
\begin{equation}
\pi(x)=\#\{p \leq x\}=\frac{x}{\log x}+O\left ( \frac{x}{\log^2 x}\right )
\end{equation}
primes, confer the literature for additional details.
\section{Irrationality Measures} \label{9900}
The irrationality measure $\mu(\alpha)$ of a real number $\alpha \in \R$ is the infimum of the subset of real numbers $\mu(\alpha)\geq1$ for which the Diophantine inequality
\begin{equation} \label{eq597.36}
  \left | \alpha-\frac{p}{q} \right | \ll\frac{1}{q^{\mu(\alpha)} }
\end{equation}
has finitely many rational solutions $p$ and $q$.

\begin{dfn} \label{dfn9900.01} {\normalfont A measure of irrationality $\mu(\alpha)\geq 2 $ of an irrational real number $\alpha \in \R^{\times}$ is a map $\psi:\N \rightarrow \R$ such that for any $p,q \in \N$ with $q\geq q_0$, 
\begin{equation} \label{eq944.70}
\left | \alpha - \frac{p}{q}  \right | \geq \frac{1}{\psi(q)} .
\end{equation}
Furthermore, any measure of irrationality of an irrational real number satisfies $\psi(q) \geq \sqrt{5}q^{\mu(\alpha)}\geq \sqrt{5}q^2$.
}
\end{dfn}
The concept of measures of irrationality of real numbers is discussed in \cite[p.\ 556]{WM00}, \cite[Chapter 11]{BB87}, et alii.

\begin{lem} {\normalfont (\cite[Theorem 2]{BY08})}\label{lem1200.35} The map $\mu : \mathbb{R} \longrightarrow [2,\infty) \cup \{1\}$ is surjective. Any number in the set $[2, \infty) \cup \{1\}$ is the irrationality measure  of some number. 
\end{lem}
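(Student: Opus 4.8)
The plan is to prove surjectivity by exhibiting, for each target value $t\in[2,\infty)\cup\{1\}$, an explicit real number with irrationality measure exactly $t$, splitting into the three cases $t=1$, $t=2$, and $t>2$. For $t=1$, any rational $\alpha=a/b$ works: if $p/q\neq a/b$ then $|\alpha-p/q|=|aq-bp|/(bq)\ge 1/(bq)$, so $|\alpha-p/q|\ll q^{-1-\varepsilon}$ has only finitely many solutions for every $\varepsilon>0$, while $|\alpha-p/q|\le 1/(bq)$ is solvable for all $q$; hence $\mu(\alpha)=1$ (the infimum in the definition being taken over $\mu\ge 1$). For $t=2$, take the golden ratio $\varphi=(1+\sqrt 5)/2=[1;1,1,\dots]$: its convergent denominators are the Fibonacci numbers, so $q_{n+1}/q_n$ stays bounded, and the classical two-sided estimate $1/\bigl(q_n(q_{n+1}+q_n)\bigr)<|\varphi-p_n/q_n|<1/(q_nq_{n+1})$ gives $|\varphi-p/q|\gg q^{-2}$ for all $p/q$ and hence $\mu(\varphi)\le 2$, while Dirichlet's theorem gives $\mu\ge 2$ for every irrational, so $\mu(\varphi)=2$.

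For $t>2$, I would build $\alpha=[a_0;a_1,a_2,\dots]$ recursively by setting $a_{n+1}=\lceil q_n^{\,t-2}\rceil$, where $q_n$ is the $n$th convergent denominator of the partial expansion constructed so far. Since $q_n^{\,t-2}\le a_{n+1}\le 2q_n^{\,t-2}$ and $q_{n+1}=a_{n+1}q_n+q_{n-1}$, induction gives $q_n^{\,t-1}\le q_{n+1}\le 3q_n^{\,t-1}$ for all large $n$, whence $\log q_{n+1}/\log q_n\to t-1$. From $q_nq_{n+1}\ge q_n^{\,t}$ and $|\alpha-p_n/q_n|<1/(q_nq_{n+1})$ we get $|\alpha-p_n/q_n|<q_n^{-t}$ for all large $n$, so $\mu(\alpha)\ge t$. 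Conversely, fix any $\mu>t$: for $q$ large one has $q^{-\mu}<(2q^2)^{-1}$, so by the classical criterion that a fraction approximating $\alpha$ within $1/(2q^2)$ must be a convergent, every solution of $|\alpha-p/q|<q^{-\mu}$ with $q$ large is some $p_n/q_n$; but $|\alpha-p_n/q_n|>1/\bigl(q_n(q_{n+1}+q_n)\bigr)>(6q_n^{\,t})^{-1}>q_n^{-\mu}$ once $q_n$ is large, a contradiction. Hence $|\alpha-p/q|<q^{-\mu}$ has finitely many solutions, so $\mu(\alpha)\le t$, and therefore $\mu(\alpha)=t$.

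The main obstacle is the constant-chasing in the last case: one must ensure that $\limsup_n\log q_{n+1}/\log q_n$ equals exactly $t-1$ — the bound $a_{n+1}\ge q_n^{\,t-2}$ pins it from below while the $O(1)$ slack of the ceiling pins it from above — and one must invoke the classical Legendre-type criterion in order to reduce approximation by arbitrary rationals to approximation by convergents. Both ingredients are entirely standard (see Khinchin's monograph on continued fractions, or \cite{BY08}), so no genuinely new difficulty arises; the substance of the proof is the explicit recursive construction of $\alpha$ for $t>2$.
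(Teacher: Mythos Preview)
Your argument is correct. The case analysis is sound, and the continued-fraction construction for $t>2$ is the classical Jarn\'ik-type device: the recursion $a_{n+1}=\lceil q_n^{\,t-2}\rceil$ forces $\log q_{n+1}/\log q_n\to t-1$, the convergent bound $|\alpha-p_n/q_n|<1/(q_nq_{n+1})$ gives $\mu(\alpha)\ge t$, and Legendre's criterion together with the lower bound $|\alpha-p_n/q_n|>1/\bigl(q_n(q_{n+1}+q_n)\bigr)$ gives $\mu(\alpha)\le t$. The constant-chasing you flag is routine and your sketch handles it adequately.

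The paper, however, does not prove this lemma at all: it simply quotes \cite[Theorem 2]{BY08}, and the list immediately following the statement points to Bugeaud's construction via the lacunary series $\psi_b=\sum_{n\ge 1} b^{-\lfloor\tau^n\rfloor}$, which has $\mu(\psi_b)=\tau$ for every real $\tau\ge 2$. So your route is genuinely different from the one the paper invokes. Your continued-fraction approach is more elementary and entirely self-contained (it needs only the standard two-sided convergent estimates and Legendre's theorem), whereas Bugeaud's lacunary-series approach produces explicit numbers given by closed-form expansions and fits naturally into the broader framework of his paper on Cantor sets. Either approach suffices here, and yours has the advantage of not requiring an external reference.
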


More precisely, 
\begin{enumerate} [font=\normalfont, label=(\arabic*)]
\item  A rational number has an irrationality measure of $\mu(\alpha)=1$, see \cite[Theorem 186]{HW08}.
\item   An algebraic irrational number has an irrationality measure of $\mu(\alpha)=2$, an introduction to the earlier proofs of  Roth Theorem appears in \cite[p.\ 147]{RH94}.
\item   Any irrational number has an irrationality measure of $\mu(\alpha)\geq 2$.
\item   A Mahler number $\psi_b=\sum_{n \geq 1} b^{-[\tau]^ n}$ in base $b\geq 3$ has an irrationality measure of $\mu(\psi_b)=\tau$, for any real number $\tau \geq 2$, see \cite[Theorem 2]{BY08}.
\end{enumerate}

\currfilename.\\


\begin{thebibliography}{99}
\bibitem{BA16} Booker, Andrew R.  \textit{\color{blue} A variant of the Euclid-Mullin sequence containing every prime}. J. Integer Seq. 19 (2016), no. 6, Article 16.6.4, 6 pp.
\bibitem{BB87} Borwein, J. M. and Borwein, P. B.  \textit{\color{blue}AGM: A Study in Analytic Number Theory and Computational Complexity}. New York: Wiley, pp. 362-386, 1987.
\bibitem{BY08} Bugeaud, Yann.  \textit{\color{blue}Diophantine approximation and Cantor sets}. Math. Ann. 341 (2008), no. 3, 677-684.
\bibitem{DL12} De Koninck, Jean-Marie; Luca, Florian. \textit{\color{blue}Analytic number theory. Exploring the anatomy of integers}. Graduate Studies in Mathematics, 134. American Mathematical Society, Providence, RI, 2012.
\bibitem{HW08} G. H. Hardy and E. M. Wright.  \textit{\color{blue}An Introduction to the Theory of Numbers.} 5th ed., Oxford University Press, Oxford, 2008.
\bibitem{BW07} David Burt, Sam Donow, Steven J. Miller, Matthew Schiffman, Ben Wieland. \textit{\color{blue}Irrationality measure and lower bounds for $\pi(x)$}. arXiv:0709.2184.
\bibitem{EL88}  Euler, Leonhard. \textit{\color{blue}Introduction to analysis of the infinite. Book I.} Translated from the Latin and with an introduction by John D. Blanton. Springer-Verlag, New York, 1988.
\bibitem{KE07} E. Kowalski. \textit{\color{blue}Counting Primes Irrationally}. https://people.math.ethz.ch/~kowalski/counting-primes-irrationally.pdf
\bibitem{NW00} Narkiewicz, W.  \textit{\color{blue}The development of prime number theory. From Euclid to Hardy and Littlewood}. Springer Monographs in Mathematics. Springer-Verlag, Berlin, 2000. 
\bibitem{RH94} Rose, H. E.  \textit{\color{blue}A course in number theory.} Second edition. Oxford Science Publications. The Clarendon Press, Oxford University Press, New York, 1994.
\bibitem{RP96} Ribenboim, Paulo.  \textit{\color{blue}The new book of prime number records}. Berlin, New York: Springer-Verlag, 1996.
\bibitem{TE86}Titchmarsh, E. C.  \textit{\color{blue}The theory of the Riemann zeta-function}. Second edition. The Clarendon Press, Oxford University Press, New York, 1986.
\bibitem{WM00} Waldschmidt, Michel. \textit{\color{blue}Diophantine approximation on linear algebraic groups. Transcendence properties of the exponential function in several variables.} Grundlehren der Mathematischen Wissenschaften [Fundamental Principles of Mathematical Sciences], 326. Springer-Verlag, Berlin, 2000.
\end{thebibliography}
\end{document}